\documentclass[12pt]{article}
\usepackage{fullpage,amsfonts,amssymb,amsmath,amsthm}
\setlength{\parindent}{0em}
\setlength{\parskip}{0mm}


\newtheorem{theorem}{Theorem}[section]
\newtheorem{hypothesis}{Hypothesis}[section]
\newtheorem{corollary}{Corollary}[section]

\title{Distribution law for twin primes amongst naturals}
\date{}
\author{Boris B. Benyaminov\\
\textit{\footnotesize 6 Moa St, Belmont, North Shore},\\
\textit{\footnotesize Auckland, Postal Code: 0622, New Zealand}\\
\textit{\footnotesize boris.b.b@hotmail.com}}

\begin{document}
\maketitle
\begin{abstract}
A hypothesis is put forward regarding the function $\pi_2(x)$ which describes the distribution of twin primes in the set of natural numbers. The function $\pi_2(x)$ is tested by evaluation and an empirical $\pi_2^{\ast}(x)$ is arrived at, which is shown to be highly accurate. Several other questions are also addressed.\\
\\
Keywords: twin prime, distribution, natural numbers.
\\
Mathematics Subject Classification 2010: 11A41, 11M26.
\end{abstract}
\section{Introduction}
In 1923, Hardy and Littlewood proposed a hypothesis for the distribution of twin primes on the interval $(1,x)$ \cite{HL}:
\begin{equation}
\pi_2(x) \sim 2\prod\limits_{p=3}^{\infty}\left(1-\frac{1}{(p-1)^2}\right)\frac{x}{\ln x}.
\label{eqn1}
\end{equation}
Later on, the following expression was put forward \cite{Luv}:
\begin{equation}
\pi_2(x) \sim 2\prod\limits_{p=3}^{\infty}\left(1-\frac{1}{(p-1)^2}\right)\frac{x}{\left(\ln x\right)^2}\prod\limits_{p=3}^{\infty}\frac{p-1}{p-2}.
\label{eqn2}
\end{equation}
The asymptotic representations (\ref{eqn1}) and (\ref{eqn2}) of the very important function $\pi_2(x)$ are too complicated to be used in practice. In this article I propose a new law for the distribution of twin primes among the naturals in the form of a much simpler $\pi_2(x)$, based on composition with the function $\pi(x)$.
\section{Results}
Recall that $\pi(x)$ $\left[\pi_2(x)\right]$ is the number of primes (twin primes) not larger than $x$. The following hypothesis is proposed for the distribution of twin primes in the set of all naturals:
\begin{hypothesis}
Twin primes are distributed among prime numbers in the same way that primes are distributed among naturals. In other words,
\begin{equation} 
\pi_2(x) = \pi\left(\pi(x)\right).
\label{eqn3}
\end{equation}
\end{hypothesis}
Table \ref{tbl1} gives some values of the functions $\pi_2(x)$ and $\pi\left(\pi(x)\right)$ for $x\leq10^6$. The values of $\pi_2(x)$ were computed according to Lehmer's tables \cite{Lehmer}. From the results in Table \ref{tbl1} it is safe to say that the ratio of $\pi_2(x)$ to $\pi\left(\pi(x)\right)$ is either exactly one or differs from unity by some negligibly small amount.\\   
\begin{table}[h]
\begin{center}
\begin{tabular}{|c|c|c|c|c|}
 \hline
  $x$ & $\pi(x)$ & $\pi_2(x)$ & $\pi\left(\pi(x)\right)$ & $\frac{\pi_2(x)}{\pi\left(\pi(x)\right)}$\\
  \hline\hline
  $25$  & $9$  & $4$   & $4$ & $1$   \\
  $50$  & $15$  & $6$   & $6$ & $1$   \\
  $75$  & $21$  & $8$   & $8$ & $1$   \\
  $125$  & $30$  & $10$   & $10$ & $1$   \\
  $150$  & $35$  & $12$   & $11$ & $1.091$   \\
  $200$  & $46$  & $15$   & $14$ & $1.071$   \\
  $300$  & $62$  & $19$   & $18$ & $1.056$   \\
  $400$  & $78$  & $21$   & $21$ & $1$   \\
  $500$  & $95$  & $24$   & $24$ & $1$   \\
  $700$  & $125$  & $30$   & $30$ & $1$   \\
  $900$  & $154$  & $35$   & $36$ & $0.972$   \\
  $1350$  & $217$  & $46$   & $47$ & $0.979$   \\
  $1500$  & $239$  & $49$   & $52$ & $0.942$   \\
  $2000$  & $303$  & $60$   & $62$ & $0.968$   \\
  $3000$  & $430$  & $81$   & $82$ & $0.988$   \\
  $4000$  & $550$  & $102$   & $101$ & $1.010$   \\
  $5000$  & $669$  & $123$   & $121$ & $1.016$   \\
  $10,000$  & $1,226$  & $201$   & $201$ & $1$   \\
  $15,000$  & $1,754$  & $268$   & $273$ & $0.982$   \\
  $20,000$  & $2,262$  & $338$   & $335$ & $1.009$   \\
  $25,000$  & $2,762$  & $403$   & $402$ & $1.002$   \\
  $30,000$  & $3,245$  & $462$   & $457$ & $1.011$   \\
  $40,000$  & $4,203$  & $585$   & $575$ & $1.017$   \\
  $50,000$  & $5,133$  & $697$   & $685$ & $1.018$   \\
  $100,000$  & $9,592$  & $1,224$   & $1,184$ & $1.034$   \\
  $200,000$  & $17,984$  & $2,159$   & $2,062$ & $1.047$   \\
  $500,000$  & $41,538$  & $4,343$   & $4,343$ & $1.035$   \\
  $1,000,000$  & $78,498$  & $7,902$   & $7,902$ & $1.033$   \\
 \hline
\end{tabular}
\caption{Testing hypothesis (\ref{eqn3}) for a few selected values of $x\leq10^6$.}
\label{tbl1}
\end{center}
\end{table} 
\\
Next, we will require the upper and lower bounds for $\pi(x)$ given in \cite{Trost}:
\begin{equation} 
\frac{2x}{3\ln x} < \pi(x) < \frac{8x}{5\ln x}.
\label{eqn4}
\end{equation}
\begin{theorem}
\label{thm1}
For all $x\geq5$ for which (\ref{eqn4}) holds, we have
\begin{equation} 
A < \pi_2(x) = \pi\left(\pi(x)\right) < B,
\label{eqn5}
\end{equation}
where
\begin{eqnarray}
A &=& \frac{4x}{9\ln x\left[\ln x-\ln(\ln x)-\ln 1.5\right]}\nonumber\ ,\\
B &=& \frac{64x}{25\ln x\left[\ln x-\ln(\ln x)+\ln 1.6\right]}\nonumber\ .
\end{eqnarray}
\end{theorem}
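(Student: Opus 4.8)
By the hypothesis (\ref{eqn3}) we have $\pi_2(x)=\pi\bigl(\pi(x)\bigr)$, so it remains to bound $\pi\bigl(\pi(x)\bigr)$. The plan is to substitute the elementary estimates (\ref{eqn4}) into themselves and then tidy the resulting expressions using monotonicity. Since $x\ge 5$ forces $\pi(x)\ge 3$, I would first note that (\ref{eqn4}) is available both with argument $x$ and with argument $\pi(x)$. Setting $L=\dfrac{2x}{3\ln x}$, $U=\dfrac{8x}{5\ln x}$, and introducing the auxiliary functions $f(t)=\dfrac{2t}{3\ln t}$, $g(t)=\dfrac{8t}{5\ln t}$, this gives at once
\[
L<\pi(x)<U,\qquad f\bigl(\pi(x)\bigr)<\pi\bigl(\pi(x)\bigr)<g\bigl(\pi(x)\bigr).
\]
A one-line derivative computation, $f'(t)=\dfrac{2}{3}\cdot\dfrac{\ln t-1}{(\ln t)^{2}}$ and $g'(t)=\dfrac{8}{5}\cdot\dfrac{\ln t-1}{(\ln t)^{2}}$, shows that $f$ and $g$ are strictly increasing on $(e,\infty)$; and since $\ln U=\ln x-\ln(\ln x)+\ln 1.6$ and $\ln L=\ln x-\ln(\ln x)-\ln 1.5$, one has exactly $g(U)=B$ and $f(L)=A$.

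For the right-hand inequality I would argue as follows. Because $U>\pi(x)\ge 3>e$, the function $g$ is increasing on $[\pi(x),U]$, so $g\bigl(\pi(x)\bigr)<g(U)=B$; chaining this with $\pi\bigl(\pi(x)\bigr)<g\bigl(\pi(x)\bigr)$ from (\ref{eqn4}) yields $\pi\bigl(\pi(x)\bigr)<B$ for every admissible $x\ge 5$. For the left-hand inequality I would run the mirror argument: provided $f$ is increasing on $[L,\pi(x)]$ we get $f\bigl(\pi(x)\bigr)>f(L)=A$, hence $\pi\bigl(\pi(x)\bigr)>f\bigl(\pi(x)\bigr)>A$.

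The step that is not purely formal, and which I expect to be the main obstacle, is that $f$ is increasing only for $t>e$, whereas $L=\tfrac{2x}{3\ln x}$ exceeds $e$ only once $\tfrac{x}{\ln x}>\tfrac{3}{2}e$, i.e.\ roughly for $x\ge 9$. For such $x$ the chain above holds as it stands. For the short leftover range $5\le x<9$ I would argue directly: there $\pi(x)\in\{3,4\}$, so $\pi\bigl(\pi(x)\bigr)=2$, and a brief numerical check confirms $A<2$ on this interval (equivalently $2x<9\ln x\,[\ln x-\ln(\ln x)-\ln 1.5]$ for $5\le x<9$). Combining the two cases finishes the theorem; apart from this small-$x$ verification --- and the preliminary remark that (\ref{eqn4}) may legitimately be used at the integer point $\pi(x)$ --- the whole argument is just the monotonicity bookkeeping above.
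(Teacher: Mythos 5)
Your proposal is correct and follows essentially the same route as the paper: substitute the bounds (\ref{eqn4}) into themselves, and use the monotonicity of $t/\ln t$ to replace $\pi(x)$ by $L=\tfrac{2x}{3\ln x}$ and $U=\tfrac{8x}{5\ln x}$, yielding $f(L)=A$ and $g(U)=B$. The only difference is that you explicitly patch the range $5\le x<9$, where $L<e$ and the monotonicity step for the lower bound genuinely fails --- a point the paper's proof silently glosses over by asserting monotonicity only for arguments $\ge 3$ --- so your version is, if anything, the more careful one.
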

\begin{proof}
The function $f(x)=x/\ln x$ is monotonically increasing for $x\geq3$. From (\ref{eqn4}) we have
\begin{equation} 
\frac{2\pi(x)}{3\ln\left[\pi(x)\right]} < \pi_2(x) = \pi\left(\pi(x)\right) < \frac{8\pi(x)}{5\ln\left[\pi(x)\right]}.
\label{eqn6}
\end{equation}
If we now consider the expression $\frac{\pi(x)}{\ln\left[\pi(x)\right]}$ as a function of $\pi(x)$, we can see it is also monotonically increasing. In our case, $\pi(x)\geq3$ as $x\geq5$ (by theorem requirements). Taking this into consideration and using the right-hand side of inequality (\ref{eqn4}) 
\begin{equation}
\pi_2(x) = \pi\left(\pi(x)\right) < \frac{8\pi(x)}{5\ln\left[\pi(x)\right]} < \frac{8\frac{8x}{5\ln x}}{5\ln\left(\frac{8x}{5\ln x}\right)} =  \frac{64x}{25\ln x\left[\ln x -\ln(\ln x)+\ln1.6\right]} = B.\nonumber
\end{equation}
Similarly, by using the left-hand side of inequality (\ref{eqn4}) we obtain the lower bound:
\begin{equation}
\pi_2(x) = \pi\left(\pi(x)\right) > \frac{2\pi(x)}{3\ln\left[\pi(x)\right]} > \frac{2\frac{2x}{3\ln x}}{3\ln\left(\frac{2x}{3\ln x}\right)} =  \frac{4x}{9\ln x\left[\ln x -\ln(\ln x)-\ln1.5\right]} = A.\nonumber
\end{equation}
Thus, from (\ref{eqn6}) we have
\begin{equation}
A < \frac{2\pi(x)}{3\ln\left[\pi(x)\right]} < \pi_2(x) = \pi\left(\pi(x)\right) < \frac{8\pi(x)}{5\ln\left[\pi(x)\right]} < B,\nonumber
\end{equation}
precisely inequality (\ref{eqn5}), as required.
\end{proof}
In Table \ref{tbl2} we check inequality (\ref{eqn5}) for several values of $x$.\\
\begin{table}[h]
\begin{center}
\begin{tabular}{|c|c|c|c|}
 \hline
  $x$ & $A$ & $\pi_2(x)$ & $B$ \\
  \hline\hline
  $50$  & $3$  & $6$  & $11$    \\
  $125$  & $4$  & $10$  & $18$    \\
  $200$  & $5$  & $15$  & $23$    \\
  $300$  & $7$  & $19$  & $31$    \\
  $400$  & $8$  & $21$  & $36$    \\
  $500$  & $9$  & $24$  & $42$    \\
  $700$  & $11$  & $30$  & $53$    \\
  $1,000$  & $14$  & $35$  & $67$    \\
  $5,000$  & $44$  & $123$  & $219$    \\
  $10,000$  & $73$  & $201$  & $372$    \\
  $25,000$  & $148$  & $403$  & $762$    \\
  $50,000$  & $256$  & $697$  & $1,328$    \\
  $100,000$  & $445$  & $1,224$  & $2,331$    \\
  $500,000$  & $1,700$  & $4,494$  & $8,853$    \\
  $1,000,000$  & $2,983$  & $8,164$  & $15.887$    \\
 \hline
\end{tabular}
\caption{Testing inequality (\ref{eqn5}) for a few selected values of $x\leq10^6$.}
\label{tbl2}
\end{center}
\end{table}
\\
Next let us look at the \textit{density} of twin primes among the primes.
\begin{theorem} 
\label{thm2}
Almost all primes are not twins, so
\begin{equation}
\pi_2(x) = o\left(\pi(x)\right).
\label{eqn7}
\end{equation}
\end{theorem}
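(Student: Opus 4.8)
The plan is to read off the result directly from the upper bound already established in the proof of Theorem~\ref{thm1}. Recall that inequality (\ref{eqn6}) gives, for every $x\geq5$ for which (\ref{eqn4}) holds,
\begin{equation}
\pi_2(x) = \pi\left(\pi(x)\right) < \frac{8\pi(x)}{5\ln\left[\pi(x)\right]}.\nonumber
\end{equation}
Dividing both sides by $\pi(x)$ (which is positive since $\pi(x)\geq3$) yields
\begin{equation}
0 < \frac{\pi_2(x)}{\pi(x)} < \frac{8}{5\ln\left[\pi(x)\right]}.\nonumber
\end{equation}
So the first step is simply to record this chain of inequalities.

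The second step is to observe that $\pi(x)\to\infty$ as $x\to\infty$ — this is elementary (for instance it already follows from the lower bound $\pi(x) > 2x/(3\ln x)$ in (\ref{eqn4})), so no deep input is needed. Consequently $\ln\left[\pi(x)\right]\to\infty$, and therefore the right-hand side $8/(5\ln[\pi(x)])$ tends to $0$. By the squeeze principle, $\pi_2(x)/\pi(x)\to0$, which is exactly the assertion $\pi_2(x) = o\left(\pi(x)\right)$ of (\ref{eqn7}).

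Honestly, there is no real obstacle here: once Hypothesis (\ref{eqn3}) is assumed and the Chebyshev-type bounds (\ref{eqn4}) are in hand, the statement is immediate, since composing $\pi$ with itself forces an extra factor of order $1/\ln\left[\pi(x)\right]$. If one prefers to avoid invoking (\ref{eqn6}) directly, an alternative is to divide the bound $B$ of Theorem~\ref{thm1} by the lower bound $2x/(3\ln x)$ for $\pi(x)$ from (\ref{eqn4}); the resulting quotient is $O\!\left(1/\left[\ln x-\ln(\ln x)+\ln1.6\right]\right)$, which again vanishes as $x\to\infty$. Either route closes the argument in a couple of lines.
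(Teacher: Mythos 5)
Your proof is correct, but it takes a genuinely different route from the paper's. You derive $\pi_2(x)/\pi(x) < 8/\bigl(5\ln[\pi(x)]\bigr)$ straight from the Chebyshev-type bounds (\ref{eqn4}) via the chain (\ref{eqn6}) already used in Theorem \ref{thm1}, and then let $x\to\infty$; this is essentially a two-line corollary of Theorem \ref{thm1}. The paper instead avoids (\ref{eqn4}) entirely: after reducing the claim to $\pi(y)/y\to0$ with $y=\pi(x)$ via Hypothesis (\ref{eqn3}), it runs the classical Legendre--Eratosthenes sieve from scratch, bounding $\pi(y)$ by $y\prod_{P\le P_r}(1-P^{-1})+2^{r+1}$, invoking only the Euler--Mertens-type inequality $\prod_{P\le x}(1-P^{-1})^{-1}>\ln x$, and choosing $r=c\ln y$ to make both terms $o(y)$. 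The trade-off is clear: your argument is much shorter but inherits the conditionality of (\ref{eqn4}) (note that Theorem \ref{thm1} is stated only ``for all $x\ge5$ for which (\ref{eqn4}) holds,'' whereas Theorem \ref{thm2} is stated without that caveat, so strictly speaking your version proves a slightly more conditional statement, and you also need (\ref{eqn4}) to be applicable at the point $\pi(x)$); the paper's argument is longer but needs only the weaker, fully elementary fact that the primes have density zero, which it proves self-containedly. Both proofs, of course, rest entirely on the unproved Hypothesis (\ref{eqn3}).
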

\begin{proof}
Assume that hypothesis (\ref{eqn3}) is true. Then, denoting $y=\pi(x)$, we have
\begin{equation}
0 \leq \frac{\pi_2(x)}{\pi(x)} = \frac{\pi\left(\pi(x)\right)}{\pi(x)} = \pi(y)/y.\nonumber
\end{equation}
We can find an upper bound for $\pi(y)/y$ by the sieve method, taking the set $\left\{y\right\}$ to contain no repeated values.\\
\\
Let $\varphi(y,r)$ be the number of naturals no larger than $y$ and not divisible by any of the first $r$ primes $P_1,P_2,\ldots,P_r$. Then
\begin{equation}
\varphi(y,r) = \sum\limits_{d|P_1P_2\ldots P_{\pi(\sqrt{y})}} \mu(d) \left\lfloor y/d\right\rfloor,
\label{eqn8}
\end{equation}
where $\mu(d)$ is the Mobius function and $d|P_1\ldots P_r$ means all $d$ not divisible by $P_1$ to $P_r$. It is clear that
\begin{equation} 
\pi(y) \leq \varphi(y,r)+r.
\label{eqn9}
\end{equation}
We next drop the floor operator in (\ref{eqn8}), and note that there are $2^r$ terms being summed. This means that the resulting expression has an error no larger than $2^r$, and by (\ref{eqn9}) we subsequently get
\begin{eqnarray}
\pi(y) &\leq& \sum\limits_{d|P_1P_2\ldots P_{\pi(\sqrt{y})}} \mu(d) \left\lfloor y/d\right\rfloor + r \leq y\times \sum\limits_{d|P_1P_2\ldots P_{\pi(\sqrt{y})}} \frac{\mu(d)}{d} + r + 2^r \nonumber\\
&=& y\prod\limits_{P\leq P_r} \left(1-P^{-1}\right) + r + 2^r < y\prod\limits_{P\leq P_r} \left(1-P^{-1}\right) + 2^{r+1},\nonumber
\end{eqnarray}
because $r<P_r<2^r$. Furthermore, from the inequality 
\begin{equation}
\prod\limits_{P\leq x}\left(1-P^{-1}\right)^{-1}>\ln x \nonumber,
\end{equation}
we find
\begin{equation}
\pi(y) < \frac{y}{\ln P_r} + 2^{r+1} < \frac{y}{\ln r} + 2^{r+1}.\nonumber
\end{equation}
Choose $r=c\ln y$, $c < 1/\ln2$. Then $2^r<y$ and
\begin{equation}
\pi(y) < \frac{y}{\ln\left[c\ln y\right]} + 2y^{c\ln2} = \frac{y}{\ln c + \ln\left[\ln y\right]} + 2y^{c\ln2},\nonumber
\end{equation}
where $c\ln2<1$. Dividing through by $y$, we get
\begin{equation}
0 \leq \frac{\pi(y)}{y} < \frac{1}{\ln c + \ln(\ln y)} + \frac{2}{y^{1-c\ln2}}.\nonumber
\end{equation}
As $y\rightarrow\infty$, the right-hand side of the above goes to zero, which implies the validity of equation (\ref{eqn7}).
\end{proof}
\begin{corollary}
Since we know that $\pi(x)=o(x)$, from Theorem \ref{thm2} it follows that $\pi_2(x)=o(x)$. In fact,
\begin{equation}
\lim\limits_{x\rightarrow\infty} \frac{\pi_2(x)}{x} = \lim\limits_{x\rightarrow\infty} \frac{\pi_2(x)}{\pi(x)}\cdot \lim\limits_{x\rightarrow\infty} \frac{\pi(x)}{x} = 0.\nonumber
\end{equation}
\end{corollary}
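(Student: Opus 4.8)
The plan is to deduce the corollary as a one-line consequence of Theorem \ref{thm2} together with the classical fact $\pi(x)=o(x)$. It is worth noting first that we do not need the full prime number theorem for the latter: dividing the right-hand inequality in (\ref{eqn4}) by $x$ gives $0\le \pi(x)/x < 8/(5\ln x)$, and the right-hand side tends to $0$ as $x\to\infty$, so $\lim\limits_{x\to\infty}\pi(x)/x=0$ already follows from the bounds in play (indeed it also follows from the upper bound $B$ of Theorem \ref{thm1}, by the same division).

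Next I would make the factorization rigorous before passing to limits. For every $x\ge 2$ we have $\pi(x)\ge 1>0$, so the identity
\begin{equation}
\frac{\pi_2(x)}{x} = \frac{\pi_2(x)}{\pi(x)}\cdot\frac{\pi(x)}{x}\nonumber
\end{equation}
is legitimate for all sufficiently large $x$. By Theorem \ref{thm2} we have $\lim\limits_{x\to\infty}\pi_2(x)/\pi(x)=0$, and by the previous paragraph $\lim\limits_{x\to\infty}\pi(x)/x=0$. Since both limits exist (and are finite), the limit of the product is the product of the limits, namely $0\cdot 0=0$; hence $\lim\limits_{x\to\infty}\pi_2(x)/x=0$, which is exactly the assertion $\pi_2(x)=o(x)$ and the displayed chain of equalities.

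There is essentially no obstacle here: the only points requiring a word of care are the non-vanishing of $\pi(x)$, which must be recorded before the quotient is split, and the fact that the product rule for limits is applicable precisely because each factor has a limit. If one prefers to bypass the product rule entirely, a squeeze argument does the job just as well: since $\pi$ is non-decreasing and $\pi(x)\le x$, we get $0\le \pi_2(x)/x = \pi(\pi(x))/x \le \pi(x)/x \to 0$. I would nonetheless present the product form, since that is the identity written in the statement of the corollary.
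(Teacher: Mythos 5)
Your proposal is correct and follows essentially the same route as the paper: the corollary is justified there precisely by the factorization $\frac{\pi_2(x)}{x}=\frac{\pi_2(x)}{\pi(x)}\cdot\frac{\pi(x)}{x}$ and the product rule for limits, using Theorem \ref{thm2} for the first factor and $\pi(x)=o(x)$ for the second. Your added remarks (deriving $\pi(x)=o(x)$ from (\ref{eqn4}), noting $\pi(x)>0$, and the optional squeeze argument) only tighten the same argument.
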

Next, we move on to construct an empirical function for the law of distribution of twin primes.\\
\\
Denote by $\eta_P$ the density of primes in the reals, and by $\eta_{PP}$ the density of twin primes in the primes, i.e. $\eta_P=\pi(x)/x$ and $\eta_{PP}=\pi_2(x)/\pi(x)$. Based on $\pi(x)=o(x)$ and (\ref{eqn7}), the densities $\eta_P$ and $\eta_{PP}$ go to zero as $x\rightarrow\infty$, but the ratio
\begin{equation}
h = \frac{\eta_{PP}}{\eta_P}
\end{equation}
remains bounded in a well-defined, constant interval (see Table \ref{tbl3}). We can obtain a rough estimate of an upper bound for $h>0$; for this we need the inequality $\pi(x)>x/\ln x$ and the right-hand side of (\ref{eqn5}). We get
\begin{eqnarray}
h &=& \frac{x\pi_2(x)}{\left[\pi(x)\right]^2} < \frac{64x^2}{\left(\frac{x}{\ln x}\right)^2 25\ln x \left(\ln x -\ln(\ln x) + \ln1.6\right)}\nonumber\\
&=& \frac{2.56\ln x}{\ln x -\ln(\ln x) + \ln1.6} < \frac{2.56\ln x}{\ln x-\ln(\ln x)} < 5.12.\nonumber
\end{eqnarray}
Thus, $0<h<5.12$. This fact allows one to construct an empirical function $\pi_2^{\ast}(x)$ for the number of twin primes on $(2,x)$. As is evident from Table \ref{tbl3}, $\pi_2^{\ast}(x)$ defined below is rather accurate.\\
\\
We obtain $\pi_2^{\ast}(x)/\pi(x) = h_c\cdot \pi(x)/x$, leading to 
\begin{equation}
\pi_2^{\ast}(x) = \left[\frac{h_c\pi^2(x)}{x}\right],
\label{eqn11}
\end{equation}
where $h_c=1.325067\ldots$ -- the mean value of $h$ for $x\leq10^6$ and we round the right-hand side of (\ref{eqn11}) to the nearest integer.\\
\\
In Table \ref{tbl3} I test the accuracy of $\pi_2^{\ast}(x)$ for $50\leq x\leq10^6$. Nevertheless, (\ref{eqn11}) is applicable for $x\geq10^6$, too. For example, there are $183,728$ twin primes less than or equal to $x=37\cdot10^6$, while $\pi_2^{\ast}(x)=183,463$ which gives a relative error of $\delta=0.0014$ (see Table \ref{tbl3}). 

\begin{table}[h]
\begin{center}
\begin{tabular}{|c|c|c|c|c|c|}
 \hline
  $x$ & $h$ & $\pi_2(x)$ & $\pi_2^{\ast}(x)$ & $\left|\Delta\right|$ & $\frac{\left|\Delta\right|}{\pi_2(x)}$ \\
  \hline\hline
  $50$ & $1.333336$ & $6$ & $6$ & $0$ & $0$ \\
  $150$ & $1.346938$ & $11$ & $11$ & $0$ & $0$ \\
  $500$ & $1.329639$ & $24$ & $24$ & $0$ & $0$ \\
  $1,500$ & $1.286742$ & $49$ & $50$ & $1$ & $0.0204$ \\
  $2,000$ & $1.307061$ & $60$ & $61$ & $1$ & $0.0167$ \\
  $3,000$ & $1.314223$ & $81$ & $82$ & $1$ & $0.0123$ \\
  $4,000$ & $1.348760$ & $102$ & $100$ & $2$ & $0.0196$ \\
  $5,000$ & $1.374114$ & $123$ & $119$ & $4$ & $0.0325$ \\
  $10,000$ & $1.330737$ & $201$ & $200$ & $1$ & $0.0050$ \\
  $15,000$ & $1.306672$ & $268$ & $274$ & $6$ & $0.0224$ \\
  $20,000$ & $1.321178$ & $338$ & $339$ & $1$ & $0.0030$ \\
  $25,000$ & $1.320680$ & $403$ & $404$ & $1$ & $0.0025$ \\
  $30,000$ & $1.316236$ & $462$ & $465$ & $3$ & $0.0065$ \\
  $40,000$ & $1.324637$ & $585$ & $585$ & $0$ & $0$ \\
  $50,000$ & $1.322696$ & $697$ & $698$ & $1$ & $0.0014$ \\
  $100,000$ & $1.330341$ & $1,224$ & $1,219$ & $5$ & $0.0041$ \\
  $200,000$ & $1.335088$ & $2,159$ & $2,143$ & $16$ & $0.0074$ \\
  $500,000$ & $1.302302$ & $4,494$ & $4,573$ & $79$ & $0.0176$ \\
  $1,000,000$ & $1.342908$ & $8,164$ & $8,165$ & $1$ & $0.0001$ \\
 \hline
\end{tabular}
\caption{Testing expression (\ref{eqn11}) for a few selected values of $x\leq10^6$.}
\label{tbl3}
\end{center}
\end{table}

\subsection*{Acknowledgements}
The author would like to thank Sophie S. Shamailov for translating and typing the article.

\end{document}